\newtheorem{theorem}{Theorem}[section]
\newtheorem{proposition}[theorem]{Proposition}
\newtheorem{corollary}[theorem]{Corollary}
\theoremstyle{definition}
\newtheorem{definition}[theorem]{Definition}
\newtheorem{example}[theorem]{Example}
\theoremstyle{remark}
\numberwithin{equation}{section}
\newcommand{\ve}{\varepsilon}
\newcommand{\NN}{\mathbb{N}}
\begin{document}

\title[{Periodic points of mappings contracting total pairwise distance}]{Periodic points of mappings contracting total pairwise distance}

\author[E. Petrov]{Evgeniy Petrov}

\address{Institute of Applied Mathematics and Mechanics of the NAS of Ukraine, Batiuka str. 19, 84116, Slovyansk, Ukraine}

\email{eugeniy.petrov@gmail.com}

\subjclass[2020]{Primary 47H09; Secondary 47H10}

\keywords{Periodic point, metric space, contraction mapping, mapping contracting total pairwise distance}

\begin{abstract}
We consider a new type of mappings in metric spaces so-called mappings contracting total pairwise distance on $n$ points. It is shown that such mappings are continuous. A theorem on the existence of periodic points for such mappings is proved and the classical Banach fixed-point theorem is obtained like a simple corollary as well as the fixed point theorem for mappings contracting perimeters of triangles. Examples of mappings contracting total pairwise distance on $n$ points and having different properties are constructed.
\end{abstract}

\maketitle

\section{Introduction}

The Banach contraction principle has been generalized in many ways over the years. It is possible to distinguish at least three types of generalizations of this theorem: in the first case the contractive nature of the mapping is weakened, see, e.g.~\cite{BW69,Ki03,Mk69,Pr20,Ra62,Su06,Wa12,P23};
in the second case the topology is weakened, see, e.g.~\cite{Ba00, JS15,Fr00,HZ07,KKR90,Ta74,SIIR20}; 
the third case is multi-valued generalizations, see, e.g.~\cite{Na69,Ma68,AK72}. Such generalizations are very numerous and as a rule establish the existence and uniqueness of a fixed point. There are considerably fewer theorems which establish the existence of periodic points of mappings in general metric spaces. The most known is Edelstein's~\cite{Ed62} theorem stating that any $\ve$-contractive mapping of a nonempty compact metric space into itself has a periodic point. See also some generalizations of this theorem~\cite{Ra06,CJU08,Ba66,Se72}. Such theorems do not generalize Banach's theorem and have a completely different proof scheme. At the same time, we note that the study of periodic points play an important role in the theory of dynamical systems~\cite{De22} and is also well developed for spaces of special types~\cite{Sa64,Bo71,BGMY80,Fr92,AM65,SY17}.

In this paper we introduce a new type of mappings in general metric spaces, which we call mappings contracting total pairwise distance on $n$ points and prove the existence theorem of periodic points for such mappings. In contrast to many other periodic point theorems, this theorem is a proper generalization of Banach's contraction principle.

Everywhere below by $|X|$ we denote the cardinality of the set $X$.

Let $(X,d)$ be a metric space, $|X|\geqslant 2$, and let $x_1$, $x_2$, \ldots, $x_n \in X$, $n\geqslant 2$. Denote by
\begin{equation}\label{e0}
S(x_1,x_2,\ldots,x_n)=\sum\limits_{1\leqslant i<j\leqslant n}d(x_i,x_j)
\end{equation}
the sum of all pairwise distances between the points from the set $\{x_1, x_2, \ldots, x_n\}$, which we call \emph{total pairwise distance}.

\begin{definition}\label{d1}
Let $n\geqslant 2$ and let $(X,d)$ be a metric space with $|X|\geqslant n$. We shall say that $T\colon X\to X$ is a \emph{mapping contracting total pairwise distance on $n$ points} if there exists $\alpha\in [0,1)$ such that the inequality
  \begin{equation}\label{e1}
S(Tx_1,Tx_2,\ldots,Tx_n) \leqslant \alpha S(x_1,x_2,\ldots,x_n)
  \end{equation}
  holds for all $n$ pairwise distinct points $x_1, x_2, \ldots, x_n \in X$.
\end{definition}

Note that the requirement for $x_1, x_2, \ldots, x_n\in X$ to be pairwise distinct is essential, which is confirmed by the following proposition.

\begin{proposition}
Suppose that in Definition~\ref{d1} inequality~(\ref{e1}) holds for any $n$ points $x_1, x_2, \ldots, x_n\in X$ with $|\{x_1, x_2, \ldots, x_n\}|=k$, where $2\leqslant k\leqslant n-1$. Then $T$ is a mapping contracting total pairwise distance on $k$ points.
\end{proposition}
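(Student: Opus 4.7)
The plan is to derive the $k$-point contraction inequality from the hypothesis by padding each $k$-tuple of distinct points to an $n$-tuple (with exactly $k$ distinct values) and then symmetrising over the choice of which point is repeated. Fix pairwise distinct points $y_1,\ldots,y_k\in X$ and set $r=n-k\geqslant 1$. For each $l\in\{1,\ldots,k\}$ let $(x^{(l)}_1,\ldots,x^{(l)}_n)$ be the $n$-tuple in which $y_l$ is repeated $r+1$ times and every other $y_j$ appears exactly once. Then $|\{x^{(l)}_1,\ldots,x^{(l)}_n\}|=k$, so the hypothesis applies and yields
\begin{equation*}
S(Tx^{(l)}_1,\ldots,Tx^{(l)}_n)\leqslant\alpha\, S(x^{(l)}_1,\ldots,x^{(l)}_n)
\end{equation*}
for each $l$.

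Next, I would expand each side by collecting equal entries; using $d(y,y)=0$, the multiplicity count gives
\begin{equation*}
S(x^{(l)}_1,\ldots,x^{(l)}_n)=(r+1)\sum_{j\ne l}d(y_l,y_j)+\sum_{\substack{i<j\\ i,j\ne l}}d(y_i,y_j),
\end{equation*}
and the analogous identity holds with $T$ applied to every argument. Summing over $l=1,\ldots,k$, a fixed unordered pair $\{i,j\}$ is picked up from the first sum with coefficient $r+1$ when $l=i$ and coefficient $r+1$ when $l=j$, and from the second sum with coefficient $1$ for each of the remaining $k-2$ values $l\notin\{i,j\}$. Thus each pair contributes with total coefficient $2(r+1)+(k-2)=2n-k$, giving
\begin{equation*}
\sum_{l=1}^{k}S(x^{(l)}_1,\ldots,x^{(l)}_n)=(2n-k)\,S(y_1,\ldots,y_k),
\end{equation*}
together with the corresponding identity on the $T$-side.

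Finally, adding the $k$ inequalities and substituting these two identities leaves
\begin{equation*}
(2n-k)\,S(Ty_1,\ldots,Ty_k)\leqslant\alpha(2n-k)\,S(y_1,\ldots,y_k).
\end{equation*}
Since $k\leqslant n-1$ forces $2n-k\geqslant n+1>0$, division by this positive constant gives exactly inequality~(\ref{e1}) for the $k$-tuple $y_1,\ldots,y_k$ with the same constant $\alpha$, which is the claim. The only slightly delicate point is the combinatorial bookkeeping that produces the same coefficient $2n-k$ on both sides and thereby enables cancellation; beyond that I foresee no real obstacle.
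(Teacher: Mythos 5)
Your proof is correct and follows essentially the same route as the paper's: pad each $k$-tuple of distinct points to an $n$-tuple by repeating one point $n-k+1$ times, apply the hypothesis to each of the $k$ resulting tuples, sum, and observe that every pair $\{y_i,y_j\}$ acquires the same positive coefficient $2(n-k)+k=2n-k$ on both sides, so it cancels. The only difference is cosmetic bookkeeping in how the expansion of $S$ is written.
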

\begin{proof}
Let $\{x_1, x_2, \ldots, x_k\}$ be the set of pairwise distinct points from $X$. Consider $k$ sets $A_i=\{x_1, x_2, \ldots, x_k, x_{k+1},\ldots,x_n\}$ consisting of $n$ points such that
$x_{k+1}=\cdots=x_n=x_i$, $i=1,\ldots,k$, i.e., the element $x_i$ occurs $n-k+1$ times in the set $A_i$.
Hence, by the supposition the inequalities
$$
S(Tx_1, Tx_2, \ldots, Tx_k, Tx_i,\ldots,Tx_i)\leqslant \alpha S(x_1, x_2, \ldots, x_k, x_i,\ldots,x_i),
$$
hold for all  $i=1,\dots,k$.
Summarizing the left and right sides of these inequalities we get
$$
\sum\limits_{i=1}^k S(Tx_1, Tx_2, \ldots, Tx_k, Tx_i,\ldots,Tx_i)\leqslant \alpha \sum\limits_{i=1}^k S(x_1, x_2, \ldots, x_k, x_i,\ldots,x_i).
$$
Hence,
\begin{multline*}
\sum\limits_{i=1}^k
\bigg(
 S(Tx_1,\ldots, Tx_k,)+ (n-k)\sum\limits_{j=1}^k d(Tx_j,Tx_i)
+S(Tx_i,\ldots,Tx_i)
\bigg) \\
\leqslant \alpha \sum\limits_{i=1}^k
\bigg(
S(x_1, \ldots, x_k)+(n-k)\sum\limits_{j=1}^k d(x_j,x_i)
+S(x_i,\ldots,x_i)
\bigg).
\end{multline*}
It is easy to see that $S(Tx_i,\ldots,Tx_i)=S(x_i,\ldots,x_i)=0$,
$$
\sum\limits_{i=1}^k
\sum\limits_{j=1}^k d(Tx_j,Tx_i)= 2S(Tx_1,\ldots, Tx_k)
$$
and
$$
\sum\limits_{i=1}^k
\sum\limits_{j=1}^k d(x_j,x_i) = 2S(x_1, \ldots, x_k).
$$
Hence, we obtain
$$
(k+2(n-k))S(Tx_1,Tx_2,\ldots,Tx_k)\leqslant \alpha (k+2(n-k))S(x_1,x_2,\ldots,x_k).
$$
Reducing both parts by $k+2(n-k)$ we get the desired assertion.
\end{proof}

\begin{proposition}
Mapping contracting total pairwise distance on $m$ points, $m\geqslant 2$, is a mapping contracting total pairwise distance on $n$ points for all $n>m$.
\end{proposition}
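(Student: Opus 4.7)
The plan is to prove the statement by a straightforward double-counting argument, summing the hypothesis over all $m$-element sub-collections of a given set of $n$ distinct points.

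First I would fix $n>m$ and $n$ pairwise distinct points $x_1,\ldots,x_n\in X$. For every $m$-element subset $I\subset\{1,\ldots,n\}$, the points $\{x_i:i\in I\}$ are automatically pairwise distinct, so the hypothesis applies:
\begin{equation*}
S\bigl(\{Tx_i:i\in I\}\bigr)\leqslant \alpha\, S\bigl(\{x_i:i\in I\}\bigr).
\end{equation*}
Summing these $\binom{n}{m}$ inequalities over all such $I$ gives
\begin{equation*}
\sum_{|I|=m} S\bigl(\{Tx_i:i\in I\}\bigr)\;\leqslant\;\alpha\sum_{|I|=m} S\bigl(\{x_i:i\in I\}\bigr).
\end{equation*}

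The key combinatorial observation is that for any fixed pair $\{i,j\}\subset\{1,\ldots,n\}$ the number of $m$-subsets $I$ with $\{i,j\}\subset I$ equals $\binom{n-2}{m-2}$. Hence each distance $d(Tx_i,Tx_j)$ (respectively $d(x_i,x_j)$) is counted exactly $\binom{n-2}{m-2}$ times in the corresponding sum, yielding
\begin{equation*}
\binom{n-2}{m-2} S(Tx_1,\ldots,Tx_n)\;\leqslant\;\alpha\binom{n-2}{m-2} S(x_1,\ldots,x_n).
\end{equation*}
Dividing by the (nonzero) binomial coefficient yields~(\ref{e1}) for $n$ points with the same constant $\alpha$.

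There is no real obstacle here; the only thing to be careful about is checking that the hypothesis can be legitimately applied to each sub-collection, which is immediate because distinctness of the $n$ original points is inherited by every subset. The argument also shows that the contraction constant does not deteriorate when passing from $m$ to $n$.
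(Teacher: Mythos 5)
Your proof is correct and is essentially the paper's argument: the paper reduces to the case $n=m+1$ and sums the $m+1$ inequalities obtained by deleting one point at a time, which is exactly your double count in the special case where the multiplicity $\binom{n-2}{m-2}$ equals $n-2$. Your version handles all $n>m$ in a single step rather than iterating through $n=m+1$, but the underlying idea --- sum the hypothesis over all $m$-element sub-collections and divide by the common multiplicity --- is the same.
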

\begin{proof}
It is clear that it is sufficient to prove this proposition for $n=m+1$. Let the points $x_1, x_2, \ldots, x_n \in X$ be  pairwise distinct. Since $m=n-1$ by the supposition the following inequalities hold:
$$
S(Tx_2,Tx_3,\ldots,Tx_n) \leqslant \alpha S(x_2,x_3,\ldots,x_n),
$$
$$
S(Tx_1,Tx_3,\ldots,Tx_n) \leqslant \alpha S(x_1,x_3,\ldots,x_n),
$$
$$\cdots$$
$$
S(Tx_1,Tx_2,\ldots,Tx_{n-1}) \leqslant \alpha S(x_1,x_2,\ldots,x_{n-1}).
$$
Summarizing the left and right sides of these inequalities and simplifying the obtained inequality we get inequality~(\ref{e1}).
\end{proof}

\section{The main results}

Recall that for a given metric space $X$, a point $x \in X$ is said to be an \emph{accumulation point} of  $X$ if every open ball centered at $x$ contains infinitely many points of $X$.

\begin{proposition}\label{p13}
Mappings contracting total pairwise distance on $n$ points are continuous.
\end{proposition}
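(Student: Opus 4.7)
The plan is to verify sequential continuity: given $y_k\to y$ in $X$, I want to show $Ty_k\to Ty$. Because inequality~\eqref{e1} may only be invoked on $n$ \emph{pairwise distinct} inputs, I would split the argument according to whether $y$ is an accumulation point of $X$; this is the only real design choice in the proof.

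If $y$ is not an accumulation point, some open ball about $y$ meets $X$ in only finitely many points, which forces $y$ to be isolated in $X$. Any sequence converging to $y$ is then eventually equal to $y$, so $Ty_k=Ty$ for large $k$ and there is nothing to prove.

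Suppose instead that $y$ is an accumulation point. For a given $\delta>0$, the ball $B(y,\delta)$ contains infinitely many points of $X$, so I can select $n-2$ pairwise distinct auxiliary points $z_1,\ldots,z_{n-2}\in B(y,\delta)\setminus\{y\}$. Whenever $k$ is large enough that $y_k\ne y$ and $d(y,y_k)<\min_i d(y,z_i)$, the $n$ points $y,y_k,z_1,\ldots,z_{n-2}$ are pairwise distinct, so \eqref{e1} applies. I would then expand both sides via~\eqref{e0}, discard the non-negative terms $\sum_i d(Ty,Tz_i)$, $\sum_i d(Ty_k,Tz_i)$, and $S(Tz_1,\ldots,Tz_{n-2})$ from the left-hand side, and bound $d(y_k,z_i)\leq d(y,y_k)+d(y,z_i)$ together with $d(z_i,z_j)<2\delta$ on the right. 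A short count of terms yields an estimate of the shape
\[
d(Ty,Ty_k)\leq \alpha(n-1)\,d(y,y_k)+\alpha(n-1)(n-2)\,\delta.
\]
Sending $k\to\infty$ kills the first summand, and since $\delta>0$ was arbitrary, $d(Ty,Ty_k)\to 0$. For the finitely many indices with $y_k=y$ or with $y_k=z_i$ the conclusion is trivial, so they cause no trouble.

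The main obstacle is purely combinatorial rather than analytic: \eqref{e1} is available only for $n$ distinct arguments, so every invocation must be paired with a verification that $y$, $y_k$, and each $z_i$ are genuinely distinct. The isolated-point case is split off for exactly this reason, and the accumulation hypothesis is what lets me both produce $n-2$ distinct auxiliary points arbitrarily close to $y$ and guarantee $y_k\ne z_i$ for all sufficiently large $k$.
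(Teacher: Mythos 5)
Your proof is correct and follows essentially the same route as the paper's: split on whether the point is isolated or an accumulation point, use the accumulation property to complete $\{y,y_k\}$ to $n$ pairwise distinct points near $y$, and apply~\eqref{e1} once together with the triangle inequality. Your version is slightly more careful in bookkeeping (explicitly guaranteeing $y_k\neq z_i$ and tracking the two scales $d(y,y_k)$ and $\delta$ separately), but the underlying idea is identical.
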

\begin{proof}
If $n=2$, then this proposition states that Banach contractions are continuous, which is well-known. Let $(X,d)$ be a metric space with $|X|\geqslant n\geqslant 3$, $T\colon X\to X$ be a mapping contracting total pairwise distance on $n$ points and let $x_1$ be an isolated point in $X$. Then, clearly, $T$ is continuous at $x_1$. Let now $x_1$ be an accumulation point. Let us show that for every $\ve>0$, there exists $\delta>0$ such that $d(Tx_1,Tx_2)<\ve$ whenever $d(x_1,x_2)<\delta$.
Since $x_1$ is an accumulation point, for every $\delta>0$ there exists $x_2,\ldots,x_{n}\in X$ such that $d(x_1,x_i)< \delta$ for $i=2,\ldots,n$ and the points $x_1,x_2,\ldots,x_{n}$ are pairwise distinct.  By~(\ref{e0}) and~(\ref{e1}) we have
$$
d(Tx_1,Tx_2)\leqslant S(Tx_1, Tx_2,\ldots,Tx_{n})
$$
$$
\leqslant \alpha S(x_1, x_2,\ldots,x_{n}).
$$
Using the triangle inequality $d(x_i,x_j) \leqslant d(x_1,x_i)+d(x_1,x_j)$ and the inequalities $d(x_1,x_i)< \delta$ for $i=2,\ldots,n$, we have
$$
d(Tx_1,Tx_2)< \alpha k(n) \delta,
$$
where $k(n)\in \NN$ is a number of inequalities of the form $d(x_1,x_i)< \delta$ applied in this estimation. Setting $\delta=\ve /(k(n)\alpha)$, we obtain the desired inequality.
\end{proof}

Let $T$ be a mapping on the metric space $X$. A point $x\in X$ is called a \emph{periodic point of period $n$} if $T^n(x) = x$. The least positive integer $n$ for which $T^n(x) = x$ is called the prime period of $x$, see, e.g.,~\cite[p.~18]{De22}. Note that a fixed point is a point of prime period $1$.

The following theorem is the main result of this paper.

\begin{theorem}\label{t1}
Let $n\geqslant 2$, $(X,d)$ be a complete metric space with $|X|\geqslant n$ and let $T\colon X\to X$ be a mapping contracting total pairwise distance on $n$ points in $X$. Then $T$ has a periodic point of prime period $k$, $k\in \{1,\ldots,n-1\}$. The number of periodic points is at most $n-1$.
\end{theorem}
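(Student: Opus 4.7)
I would take any starting point $x_0\in X$ and study its forward orbit $x_k=T^k(x_0)$, splitting the analysis into two cases according to whether the orbit ever revisits a point quickly.

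\emph{Case A: there exists $k\geqslant 0$ such that the block $x_k,x_{k+1},\ldots,x_{k+n-1}$ contains fewer than $n$ distinct points.} By pigeonhole there are indices $k\leqslant i<j\leqslant k+n-1$ with $x_i=x_j$, so $T^{j-i}(x_i)=x_i$ and $x_i$ is periodic with prime period dividing $j-i\leqslant n-1$.

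\emph{Case B: for every $k\geqslant 0$ the points $x_k,\ldots,x_{k+n-1}$ are pairwise distinct.} Then the defining inequality~(\ref{e1}) applies at every step and gives $S(x_{k+1},\ldots,x_{k+n})\leqslant\alpha\,S(x_k,\ldots,x_{k+n-1})$; by induction
$$S(x_k,\ldots,x_{k+n-1})\leqslant\alpha^k\,S(x_0,\ldots,x_{n-1}).$$
Since $d(x_k,x_{k+1})$ is one of the summands of $S(x_k,\ldots,x_{k+n-1})$, consecutive distances decay geometrically, so $\sum_{k}d(x_k,x_{k+1})<\infty$ and $\{x_k\}$ is Cauchy. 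Completeness of $X$ and continuity of $T$ (Proposition~\ref{p13}) then produce a limit $x^*$ with $Tx^*=x^*$, a periodic point of prime period $1$. Either case therefore yields a periodic point of prime period in $\{1,\ldots,n-1\}$.

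For the upper bound on the number of periodic points I would argue by contradiction: suppose $p_1,\ldots,p_n$ are pairwise distinct periodic points with periods $k_1,\ldots,k_n$ and set $K$ to be any common multiple, so $T^K(p_i)=p_i$ for each $i$. A simple check shows that for every $m\in\{0,1,\ldots,K\}$ the images $T^m(p_1),\ldots,T^m(p_n)$ are still pairwise distinct (from $T^m(p_i)=T^m(p_j)$ apply $T^{K-m}$ to get $p_i=p_j$). Hence the contraction inequality may be applied $K$ times to the $n$-tuple $(p_1,\ldots,p_n)$, yielding
$$S(p_1,\ldots,p_n)=S(T^K p_1,\ldots,T^K p_n)\leqslant\alpha^K\,S(p_1,\ldots,p_n).$$
The distinctness of the $p_i$ makes $S(p_1,\ldots,p_n)>0$, while $\alpha^K<1$; contradiction.

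\textbf{Main obstacle.} The non-routine part is setting up the dichotomy so that Case~B genuinely permits iteration of~(\ref{e1}): one must verify pairwise distinctness of every $n$-window along the whole orbit before iterating, and for the cardinality bound one must verify the analogous distinctness of $T^m(p_1),\ldots,T^m(p_n)$ across all $m\leqslant K$. Once these distinctness checks are in place, the contraction inequality does the rest essentially mechanically.
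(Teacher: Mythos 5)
Your proof is correct and follows essentially the same route as the paper's: iterate $T$, use the contraction of the total pairwise distance on $n$-windows of the orbit to obtain a Cauchy sequence, invoke completeness and continuity (Proposition~\ref{p13}) to get a fixed point, and rule out $n$ distinct periodic points via the least-common-multiple argument. The only difference is organizational --- you phrase the orbit analysis as a direct dichotomy (a repeat in some window versus all windows pairwise distinct) where the paper argues by contradiction assuming no periodic point of prime period at most $n-1$, and your verification that $T^m p_1,\ldots,T^m p_n$ stay distinct by applying $T^{K-m}$ is a slightly cleaner version of the paper's orbit-based argument.
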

\begin{proof}
Suppose that $T$ does not have periodic points of prime period $k$, $k\in \{1,\ldots,n-1\}$. In particular, it means that $T$ has no fixed points. Let $x_0\in X$, $Tx_0=x_1$, $Tx_1=x_2$, \ldots, $Tx_n=x_{n+1}$, \ldots . Let us show that all $x_i$ are different. Since $x_i$ is not fixed, then $x_i\neq x_{i+1}=Tx_i$.
If $n=2$, then the proof should be read starting from the next paragraph.
Let $n=3$. Since $T$ has no periodic points of prime period $2$ we have $x_{i+2}=T(T(x_i))\neq x_i$ and by the supposition that $x_{i+1}$ is not fixed we have $x_{i+1}\neq x_{i+2}=Tx_{i+1}$. Hence, $x_i$, $x_{i+1}$ and $x_{i+2}$ are pairwise distinct.
Further, let $n\geqslant 4$. Since $T$ has no periodic points of prime period $3$ we have $x_{i+3}=T(T(Tx_i))\neq x_i$. Since $T$ has no periodic points of prime period $2$ we have $x_{i+3}=T(Tx_{i+1})\neq x_{i+1}$, and by the supposition that $x_{i+2}$ is not fixed we have $x_{i+2}\neq x_{i+3}=Tx_{i+2}$. Hence, $x_i$, $x_{i+1}$, $x_{i+2}$ and $x_{i+3}$ are pairwise distinct.
Repeating these considerations $n-4$ times we see that the points $x_i, x_{i+1}, x_{i+2}$, \ldots , $x_{i+n-1}$ are pairwise distinct for every $i=0,1,\ldots$.

Further, set
$$
s_0=S(x_0,x_1\ldots,x_{n-1}),
$$
$$
s_1=S(x_1,x_2,\ldots,x_n),
$$
$$
\cdots
$$
$$
s_n=S(x_n,x_{n+1},\ldots,x_{2n-1}),
$$
$$
\cdots .
$$
Since  $x_i, x_{i+1}, x_{i+2}$, \ldots , $x_{i+n-1}$ are pairwise distinct by~(\ref{e1}) we have $s_1\leqslant \alpha s_0$, $s_2\leqslant \alpha s_1$, \ldots, $s_n\leqslant \alpha s_{n-1}$ and
\begin{equation}\label{e2}
s_0>s_1>\ldots>s_n>\ldots .
\end{equation}
Suppose that $j\geqslant n$ is a minimal natural number such that $x_j=x_i$ for some $i$ such that $0\leqslant i<j-n+1$. Then  $x_{j+1}=x_{i+1}$, $x_{j+2}=x_{i+2}$,\ldots Hence, $s_i=s_j$ which contradicts to~(\ref{e2}).

Further, let us show that $(x_i)$ is a Cauchy sequence.  It is clear that
$$
d(x_0,x_1)\leqslant s_0,
$$

$$
d(x_1,x_2)\leqslant s_1\leqslant \alpha s_0,
$$

$$
d(x_2,x_3)\leqslant s_2\leqslant \alpha s_1\leqslant  \alpha^2 s_0,
$$
$$
\cdots
$$
$$
d(x_{n},x_{n+1})\leqslant s_{n}\leqslant \alpha^{n} s_0,
$$
$$
d(x_{n+1},x_{n+2})\leqslant s_{n+1}\leqslant \alpha^{n+1} s_0,
$$

$$
\cdots .
$$
By the triangle inequality,
$$
d(x_n,\,x_{n+p})\leqslant d(x_{n},\,x_{n+1})+d(x_{n+1},\,x_{n+2})+\ldots+d(x_{n+p-1},\,x_{n+p})
$$
$$
\leqslant \alpha^{n}s_0+\alpha^{n+1}s_0+\cdots +\alpha^{n+p-1}s_0 = \alpha^{n}(1+\alpha+\ldots+\alpha^{p-1})s_0
=\alpha^{n}\frac{1-\alpha^{p}}{1-\alpha}s_0.
$$
Since by the supposition $0\leqslant\alpha<1$, then $d(x_n,\,x_{n+p})<\alpha^{n}\frac{1}{1-\alpha}s_0$. Hence, $d(x_n,\,x_{n+p})\to 0$ as $n\to \infty$ for every $p>0$. Thus, $\{x_n\}$ is a Cauchy sequence. By the completeness of $(X,d)$, this sequence has a limit $x^*\in X$.

Let us prove that $Tx^*=x^*$. Since $x_n\to x^*$, and by Proposition~\ref{p13} the mapping $T$ is continuous, we have $x_{n+1}=T x_n\to Tx^*$.  By the triangle inequality we have
$$
d(x^*,Tx^*)\leqslant d(x^*,x_{n})+d(x_{n},Tx^*)
\to 0
$$
as $n\to \infty$, which means that $x^*$ is the fixed point and contradicts to our assumption.

Suppose that there exists $n$ pairwise distinct periodic points $x_1,\ldots,x_n$. Let $p(x_i)$ be a prime period of $x_i$, $i=1,\ldots,n$ and let $k$ be the least common multiple of the integers $p(x_1),\ldots,p(x_n)$. It is clear that
\begin{equation}\label{e5}
T^kx_i=x_i \, \text{ for all } \, i=1,\ldots,n.
\end{equation}
Further, observe the following: let $x$ and $y$ be different periodic points of $T$. If the orbits $O(x)$ and $O(y)$ are different sets, then, clearly, $T^jx\neq T^jy$ for all $j=1,2,\ldots$. If $O(x)=O(y)$, then the relation $T^jx\neq T^jy$ is also evident for all $j=1,2,\ldots$. Hence, we can assert that for all $j=1,2,\ldots$ the elements of the set $\{T^jx_1,T^jx_2,\ldots,T^jx_n\}$ are pairwise distinct and we can apply inequality~(\ref{e1}) consecutively $k$ times:
\begin{multline*}
S(T^kx_1,T^kx_2,\ldots,T^kx_n) \leqslant
\alpha S(T^{k-1}x_1,T^kx_2,\ldots,T^{k-1}x_n) \leqslant
 \ldots \\ \leqslant \alpha^{k-1} S(Tx_1,Tx_2,\ldots,Tx_n) \leqslant \alpha^k S(x_1,x_2,\ldots,x_n).
\end{multline*}
But by~(\ref{e5}) the equality
$$
S(T^kx_1,T^kx_2,\ldots,T^kx_n)=S(x_1,x_2,\ldots,x_n)
$$
holds, which is a contradiction.
\end{proof}

Let $(X,d)$ be a metric space. Then a mapping $T\colon X\to X$ is called a \emph{contraction mapping} on $X$ if there exists $\alpha\in [0,1)$ such that
\begin{equation}\label{e3}
d(Tx,Ty)\leqslant \alpha d(x,y)
\end{equation}
for all $x,y \in X$.

\begin{corollary}
(Banach fixed-point theorem)
Let $(X,d)$ be a  non\-empty complete metric space with a contraction mapping $T\colon X\to X$.  Then $T$ admits a unique fixed point.
\end{corollary}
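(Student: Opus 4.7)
The plan is to recognize that the Banach contraction hypothesis~(\ref{e3}) is exactly the $n=2$ case of Definition~\ref{d1}: with $n=2$, the total pairwise distance $S(x_1,x_2)$ reduces to $d(x_1,x_2)$, so~(\ref{e1}) becomes~(\ref{e3}) for any two distinct points $x,y$; for coinciding points the inequality is trivial. Hence every contraction mapping is a mapping contracting total pairwise distance on $2$ points with the same constant $\alpha$.

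Next I would dispose of the degenerate case $|X|=1$ separately, where the unique point is obviously the unique fixed point. Assuming $|X|\geqslant 2$, I would invoke Theorem~\ref{t1} with $n=2$: since $(X,d)$ is complete and $T$ contracts total pairwise distance on $2$ points, the theorem produces a periodic point of prime period $k\in\{1,\ldots,n-1\}=\{1\}$, i.e.\ an honest fixed point of $T$.

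For uniqueness, the simplest route is to use the second conclusion of Theorem~\ref{t1}, which asserts that the number of periodic points is at most $n-1=1$; this yields uniqueness immediately. Alternatively, one can argue directly: if $Tx=x$ and $Ty=y$ with $x\neq y$, then~(\ref{e3}) gives $d(x,y)=d(Tx,Ty)\leqslant\alpha d(x,y)$ with $\alpha<1$, a contradiction.

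No step here looks hard; the only mild subtlety is checking the verification that contractions literally fit the hypothesis of Definition~\ref{d1} in the edge case $n=2$, which is essentially notational, and handling the trivial case $|X|=1$ so that the cardinality assumption of Theorem~\ref{t1} is satisfied.
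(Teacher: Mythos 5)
Your proposal is correct and follows essentially the same route as the paper: dispose of $|X|=1$ trivially, observe that~(\ref{e1}) with $n=2$ is exactly~(\ref{e3}), and apply Theorem~\ref{t1} with $n=2$ to get existence and uniqueness. Your extra direct uniqueness argument via $d(x,y)\leqslant\alpha d(x,y)$ is a harmless addition the paper omits.
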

\begin{proof}
If $|X|=1$, then Banach fixed point theorem is trivial. For $|X|\geqslant 2$ this theorem follows from Theorem~\ref{t1} by setting $n=2$. In this case inequality~(\ref{e1}) turns into~(\ref{e3}).
\end{proof}

The following definition was introduced in~\cite{P23}. In particular, it is a partial case of Definition~\ref{d1} when $n=3$.
\begin{definition}\label{ed1}
Let $(X,d)$ be a metric space with $|X|\geqslant 3$. We shall say that $T\colon X\to X$ is a \emph{mapping contracting perimeters of triangles} on $X$ if there exists $\alpha\in [0,1)$ such that the inequality
  \begin{equation}\label{ee1}
   d(Tx,Ty)+d(Ty,Tz)+d(Tx,Tz) \leqslant \alpha (d(x,y)+d(y,z)+d(x,z))
  \end{equation}
  holds for all three pairwise distinct points $x,y,z \in X$.
\end{definition}
The following statement was proved in~\cite[Theorem~2.4]{P23} and it is a direct consequence of Theorem~\ref{t1} in the case $n=3$.
\begin{corollary}\label{c1}
Let $(X,d)$, $|X|\geqslant 3$, be a complete metric space and let $T\colon X\to X$ be a mapping contracting perimeters of triangles on $X$. Then $T$ has a fixed point if and only if $T$ does not possess periodic points of prime period $2$. The number of fixed points is at most two.
\end{corollary}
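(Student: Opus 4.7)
The strategy is to specialize Theorem~\ref{t1} to the case $n=3$ and to supplement it with a short argument showing that a fixed point of $T$ and a prime period $2$ point of $T$ cannot coexist.

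Applying Theorem~\ref{t1} with $n=3$ immediately yields that $T$ possesses a periodic point of prime period $k\in\{1,2\}$, and that the total number of periodic points---in particular of fixed points---is at most $n-1=2$, which already establishes the cardinality bound in the statement. Moreover, one direction of the equivalence is automatic: if $T$ has no periodic point of prime period $2$, then the periodic point supplied by Theorem~\ref{t1} must have prime period $1$, hence is a fixed point.

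For the remaining direction, I would argue by contradiction. Assume that $T$ has both a fixed point $x^*$ and a prime period $2$ point $y$, so $Ty\neq y$ and $T^2y=y$. The first step is to verify that $x^*$, $y$, $Ty$ are pairwise distinct: if $x^*=y$ then $Ty=Tx^*=x^*=y$, contradicting $Ty\neq y$; and if $x^*=Ty$ then $x^*=Tx^*=T(Ty)=y$, reducing to the previous case. Once distinctness is in hand, I would apply~(\ref{ee1}) to the triple $x^*, y, Ty$; using $Tx^*=x^*$ and $T(Ty)=y$, the left-hand side becomes $d(x^*,Ty)+d(Ty,y)+d(x^*,y)$, which is exactly the sum appearing on the right-hand side. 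The inequality thus reads $S\leqslant\alpha S$ for $S>0$, contradicting $\alpha<1$.

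The only real obstacle---and the one detail most prone to being glossed over---is checking the pairwise distinctness of $x^*, y, Ty$ before invoking~(\ref{ee1}), since Definition~\ref{ed1} imposes the triangle-perimeter inequality only on triples of distinct points. Everything else is a direct consequence of Theorem~\ref{t1} with $n=3$.
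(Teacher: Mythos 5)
Your proposal is correct. Note that the paper offers no written-out proof of this corollary: it simply declares it a direct consequence of Theorem~\ref{t1} with $n=3$, the implicit reading being that the ``at most $n-1$ periodic points'' clause already forbids the coexistence of a fixed point $x^*$ and a prime period $2$ point $y$, since $x^*$, $y$, $Ty$ would then be three distinct periodic points. You instead establish this non-coexistence by hand, applying inequality~(\ref{ee1}) to the triple $x^*,y,Ty$ and deriving $S\leqslant\alpha S$ with $S>0$; this is a legitimate alternative and is in fact a special case of exactly the argument the paper uses to prove the cardinality bound inside Theorem~\ref{t1} (there one takes $k=\operatorname{lcm}$ of the prime periods and iterates the inequality $k$ times, whereas in your situation $k=2$ and a single application suffices because both $x^*$ and the pair $\{y,Ty\}$ return after applying $T$ once more). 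Your attention to the pairwise distinctness of $x^*,y,Ty$ is exactly the detail that makes the application of~(\ref{ee1}) legal, and the rest of your argument (existence from Theorem~\ref{t1}, the bound of two fixed points from the bound of two periodic points) matches the paper's intent. The only trade-off is that your route re-proves a fragment of Theorem~\ref{t1} rather than quoting it, at the gain of a self-contained and slightly more transparent contradiction.
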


\begin{example}
Let $X$ be a metric space with $|X|=n\geqslant 2$, $Y$ be a subset of $X$ with $1\leqslant |Y| \leqslant n-1$ and let $T\colon X\to Y$ be any mapping. It is clear that inequality~(\ref{e1}) holds, where $\{x_1,x_2,\ldots,x_n\}=X$. Hence, by Theorem~\ref{t1} the mapping $T$ has a periodic point of prime period $k\in \{1,\ldots,n-1\}$.
Note that there can be no periodic points in the set $X\setminus Y$.
In particular, it follows from this example that any mapping $T\colon Y\to Y$ has a periodic point, where $Y$ is a finite nonempty metric space.
\end{example}

\begin{example}
Let us construct an example of a mapping contracting perimeters of triangles and having two periodic points on a metric space $X$ with $|X|=\mathfrak{c}$.
Let $(X,d)$ be a metric space such that $X=\{0\}\cup\{1\}\cup[3,\infty)\subseteq \mathbb R^1$, where $d$ is the usual Euclidean distance. Define a mapping $T$ as follows:
$$
Tx=
\begin{cases}
1, &\text{if } x=0,\\
0, &\text{if } x=1,\\
1, &\text{if } x\in [3,\infty).
\end{cases}
$$
It is clear that $0$ and $1$ are periodic points of prime period $2$. The simple proof of the fact that $T$ is a mapping contracting perimeters of triangles is left to the reader.
\end{example}

\begin{proposition}
Suppose that under the supposition of Theorem~\ref{t1} the mapping $T$ has a fixed point $x^*$ which is a limit of some iteration sequence $x_0, x_1=Tx_0, x_2=Tx_1,\ldots$ such that $x_i\neq x^*$ for all $i=1,2,\ldots$. Then $x^*$ is the unique fixed point.
\end{proposition}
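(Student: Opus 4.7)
The plan is to argue by contradiction: assume there is a second fixed point $y^{*}\neq x^{*}$, and use the iteration sequence $\{x_i\}$ converging to $x^{*}$ to eventually apply inequality~(\ref{e1}) to a set consisting of a sliding window of the orbit together with $y^{*}$, then pass to the limit.

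First I would establish that the points $x_{0},x_{1},x_{2},\dots$ are pairwise distinct. Indeed, if $x_{i}=x_{j}$ for some $i<j$, then by induction $x_{i+m}=x_{j+m}$ for every $m\geqslant 0$, so $\{x_{k}\}$ is eventually periodic; a convergent eventually-periodic sequence must be eventually constant, forcing $x_{k}=x^{*}$ for large $k$, contradicting the hypothesis $x_{i}\neq x^{*}$.

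Next, since $x_{k}\to x^{*}\neq y^{*}$, there is an index $k_{0}$ such that $y^{*}\notin\{x_{k},x_{k+1},\dots,x_{k+n-2}\}$ for every $k\geqslant k_{0}$. Combined with the previous step, the $n$ points $x_{k},x_{k+1},\dots,x_{k+n-2},y^{*}$ are then pairwise distinct, so inequality~(\ref{e1}) applies. Using $Tx_{j}=x_{j+1}$ and $Ty^{*}=y^{*}$, this gives
\begin{equation*}
S(x_{k+1},x_{k+2},\dots,x_{k+n-1},y^{*})\leqslant\alpha\,S(x_{k},x_{k+1},\dots,x_{k+n-2},y^{*})
\end{equation*}
for every $k\geqslant k_{0}$.

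Finally, I would let $k\to\infty$. Because the metric $d$ is continuous and $x_{k+j}\to x^{*}$ for each fixed $j\in\{0,1,\dots,n-1\}$, both sides of the inequality pass to a sum over $n$ points consisting of $n-1$ copies of $x^{*}$ together with $y^{*}$; the internal distances among the $x^{*}$'s vanish and only the $n-1$ pairs involving $y^{*}$ survive, yielding
\begin{equation*}
(n-1)\,d(x^{*},y^{*})\leqslant\alpha\,(n-1)\,d(x^{*},y^{*}).
\end{equation*}
Since $\alpha<1$, this forces $d(x^{*},y^{*})=0$, i.e.\ $y^{*}=x^{*}$, the desired contradiction. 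The only mildly delicate step is the pairwise-distinctness argument at the start (ensuring the sequence never repeats so that~(\ref{e1}) may be legitimately invoked); everything else is a routine continuity/limit computation.
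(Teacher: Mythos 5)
Your proof is correct and follows essentially the same route as the paper: establish that the iterates are pairwise distinct and distinct from the second fixed point, apply inequality~(\ref{e1}) to a sliding window of the orbit augmented by the other fixed point, and let the index tend to infinity so that both sides converge to a positive multiple of $d(x^*,y^*)$, forcing a contradiction with $\alpha<1$. The only (harmless) difference is that your $n$-point set uses $n-1$ iterates plus $y^*$ while the paper uses $x^*$, $x^{**}$ and $n-2$ iterates, which makes your version work uniformly for all $n\geqslant 2$ rather than deferring $n=3$ to the earlier reference.
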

\begin{proof}
For the case $n=3$ this assertion was shown in~\cite{P23}. Let now $n\geqslant 4$. Indeed, suppose that $T$ has another fixed point $x^{**}\neq x^*$. It is clear that $x_i\neq x^{**}$ for all $i=1,2,\ldots$. Moreover, it is clear that $x_i\neq x_j$ for $i\neq j$. Otherwise this sequence is cyclic starting from some index and can not be convergent to $x^*$. Hence, we have that the points $x^*$, $x^{**}$ and $x_i$, $x_j$ are pairwise distinct for all $i,j=1,2,\ldots$, $i\neq j$.

Consider the ratio
$$
R_i=\frac{S(Tx^*,Tx^{**},Tx_i,Tx_{i+1},\ldots,Tx_{i+n-3})}
{S(x^*,x^{**},x_i,x_{i+1},\ldots,x_{i+n-3})}
$$
$$
=\frac{S(x^*,x^{**},x_{i+1},x_{i+2},\ldots,x_{i+n-2})}
{S(x^*,x^{**},x_i,x_{i+1},\ldots,x_{i+n-3})}.
$$
Taking into consideration that $d(x^*,x_{i})\to 0$ and $d(x^{**},x_{i})\to d(x^{**},x^*)$, we obtain
$R_i\to 1$ as $i\to \infty$, which contradicts to condition~(\ref{e1}).
\end{proof}

\begin{proposition}\label{p1}
Let $n\geqslant 2$, $(X,d)$ be a metric space, $|X|\geqslant n$, and let  $T\colon X\to X$ be a mapping contracting total pairwise distance on
$n$ points. If $x$ is an accumulation point of  $X$, then inequality~(\ref{e3}) holds for all points $y\in X$.
\end{proposition}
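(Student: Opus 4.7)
My plan is to pad the pair $(x,y)$ with $n-2$ auxiliary points clustered near $x$, apply inequality~(\ref{e1}) to the resulting $n$-tuple of pairwise distinct points, and then let those auxiliary points collapse to $x$. Continuity of $T$, supplied by Proposition~\ref{p13}, is exactly what will let the limit on the left-hand side behave as required.

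First I would dispose of the trivial cases. If $y=x$ then both sides of~(\ref{e3}) vanish, and if $n=2$ the hypothesis~(\ref{e1}) already coincides with~(\ref{e3}) on distinct points. Assume therefore $n\geqslant 3$ and $y\neq x$. Since $x$ is an accumulation point of $X$, every open ball centered at $x$ contains infinitely many points, so for each $k\in\NN$ with $1/k<d(x,y)$ I can choose pairwise distinct $z_1^{(k)},\ldots,z_{n-2}^{(k)}\in X\setminus\{x,y\}$ with $d(x,z_i^{(k)})<1/k$. Then $x,y,z_1^{(k)},\ldots,z_{n-2}^{(k)}$ are $n$ pairwise distinct points of $X$, and Definition~\ref{d1} yields
$$
S(Tx,Ty,Tz_1^{(k)},\ldots,Tz_{n-2}^{(k)})\leqslant \alpha\, S(x,y,z_1^{(k)},\ldots,z_{n-2}^{(k)}).
$$

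Next I would let $k\to\infty$ and track each pairwise distance. On the right-hand side, $d(x,y)$ contributes once, the $n-2$ terms $d(x,z_i^{(k)})$ tend to $0$, the $n-2$ terms $d(y,z_i^{(k)})$ tend to $d(x,y)$ by continuity of the metric, and the $\binom{n-2}{2}$ terms $d(z_i^{(k)},z_j^{(k)})$ tend to $0$; the total tends to $(n-1)d(x,y)$. On the left-hand side, Proposition~\ref{p13} gives $Tz_i^{(k)}\to Tx$, so identical bookkeeping produces the limit $(n-1)d(Tx,Ty)$. Passing to the limit in the displayed inequality and dividing by $n-1$ gives~(\ref{e3}).

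The only point requiring any care is securing the auxiliary points $z_i^{(k)}$ pairwise distinct and distinct from $x$ and $y$, which is guaranteed by the accumulation-point hypothesis; I do not anticipate a real obstacle. The rest is a routine limit computation resting on continuity of the distance function and of $T$.
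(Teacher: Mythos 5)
Your argument is correct and is essentially the proof given in the paper: the paper likewise pads $(x,y)$ with $n-2$ distinct auxiliary points accumulating at $x$ (a sliding window $z_i,\ldots,z_{i+n-3}$ from a single sequence rather than a fresh batch for each $k$), applies~(\ref{e1}), and passes to the limit using continuity of the metric and of $T$ via Proposition~\ref{p13} to obtain $(n-1)d(Tx,Ty)\leqslant\alpha (n-1)d(x,y)$. The only cosmetic difference is that the paper dispatches $n=3$ by citation to~\cite{P23} and writes out $n\geqslant 4$, whereas your computation handles all $n\geqslant 3$ uniformly.
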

\begin{proof}
If $n=2$, then $T$ is a contraction mapping and inequality~(\ref{e3}) holds. The case $n=3$ was proved in~\cite[Proposition 2.7]{P23}. Suppose that $n\geqslant 4$. Let $x\in X$ be an accumulation point and let $y\in X$. If $y=x$, then clearly~(\ref{e3}) holds. Let now $y\neq x$. Since $x$ is an accumulation point, then there exists a sequence $z_n\to x$ such that $z_n \neq x$, $z_n \neq y$ and all $z_n$ are different.
Hence, by~(\ref{e1}) the inequality
\begin{equation*}
  S(Tx,Ty,Tz_i,\ldots,Tz_{i+n-3})\leqslant \alpha S(x,y,z_i,\ldots,z_{i+n-3})
\end{equation*}
holds for every $i\in \mathbb N$. Since $d(x,z_i)\to 0$ and every metric is continuous we have $d(z_i,y) \to d(x,y)$. Since $T$ is continuous, we have $d(Tx,Tz_i)\to 0$ and, consequently,  $d(Tz_i,Ty)\to d(Tx,Ty)$. Observing that $|{z_i,\ldots,z_{i+n-3}}|=n-2$ and letting $i\to \infty$, we obtain
  \begin{equation*}
   d(Tx,Ty)+(n-2)d(Tx,Ty) \leqslant \alpha (d(x,y)+(n-2)d(x,y)),
  \end{equation*}
which is equivalent to~(\ref{e3}).
\end{proof}

\begin{corollary}\label{cor1}
Let $n \geqslant 2$, $(X,d)$ be a metric space, $|X|\geqslant n$, and let $T\colon X\to X$ be a mapping contracting total pairwise distance on $n$ points. If all points of $X$ are accumulation points, then $T$ is a contraction mapping.
\end{corollary}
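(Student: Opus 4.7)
The plan is essentially to unwrap the definition of a contraction mapping and apply Proposition~\ref{p1} pointwise. A contraction mapping requires inequality~(\ref{e3}) to hold for \emph{all} pairs $x,y \in X$, while Proposition~\ref{p1} already gives that inequality whenever the first argument is an accumulation point. So under the hypothesis that every point of $X$ is an accumulation point, the conclusion is immediate.

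Concretely, I would fix arbitrary $x,y \in X$ and note that $x$ is an accumulation point by hypothesis. Then Proposition~\ref{p1} applies with this $x$ and this $y$, yielding $d(Tx,Ty) \leqslant \alpha d(x,y)$ with the same contraction constant $\alpha \in [0,1)$ coming from Definition~\ref{d1}. Since $x,y$ were arbitrary, $T$ satisfies~(\ref{e3}) on all of $X$, i.e.\ $T$ is a contraction mapping.

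There is essentially no obstacle here beyond invoking the earlier proposition; the genuine content lies in Proposition~\ref{p1}, where the accumulation point hypothesis is used to construct a sequence $z_i \to x$ of pairwise distinct points avoiding $y$, which then feeds into~(\ref{e1}) and produces~(\ref{e3}) after passing to the limit. In the corollary the only thing to observe is that the contraction constant $\alpha$ transfers unchanged, so we do not need to take an infimum or supremum over points, and no completeness or continuity argument beyond what is already built into Proposition~\ref{p1} is required.
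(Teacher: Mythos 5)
Your argument is correct and is exactly the route the paper intends: the corollary is stated immediately after Proposition~\ref{p1} with no separate proof, precisely because applying that proposition to an arbitrary pair $x,y\in X$ (with $x$ an accumulation point by hypothesis, and the same constant $\alpha$) gives~(\ref{e3}) everywhere. Nothing further is needed.
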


\begin{example}
Let us construct an example of a mapping $T\colon X\to X$ contracting total pairwise distance on $n$ points, $n\geqslant 3$, but not contracting total pairwise distance on $n-1$ points for a metric space $X$ with $|X|=\aleph_0$. Let
$$
X=\{x^*, x_1^1,\ldots,x_1^{n-1},\ldots,x_{i}^1,\ldots,x_i^{n-1},\ldots \}
$$
and let $a$ and $\ve$ be positive real numbers. Define the metric $d$ on $X$ as follows:
$$
d(x,y)=
\begin{cases}
\frac{\ve}{2^{i-1}}, &\text{if } x=x_{i}^l, y=x_{i}^m, l\neq m,\\
&\text{and } i \text{ is odd},\\
\frac{\ve}{2^{i-2}}, &\text{if } x=x_{i}^l, y=x_{i}^m, l\neq m,\\
&\text{and } i \text{ is even},\\
\frac{a}{2^{i-1}}, &\text{if } x=x_i^l, \,  y=x_{i+1}^m,\\
d(x_i^1,x_{i+1}^1)+\cdots+d(x_{j-1}^1,x_j^1), &\text{if } x=x_i^l, \,  y=x_{j}^m ~~ i+1<j,\\
2a-d(x_1^1,x_i^l), &\text{if } x=x_i^l, \, y=x^*,\\
0, &\text{if } x=y,\\

\end{cases}
$$
where $1\leqslant l,m\leqslant n-1$, $i=1,2,\ldots$.

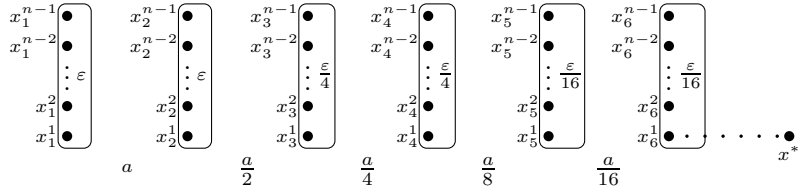
\begin{figure}[ht]
\begin{center}
\begin{tikzpicture}[scale=0.8]
\draw (1,-0.1-0.2) node [below] {{$\scriptstyle{a}$}};
\draw (3,-0.2) node [below] {{$\frac{a}{2}$}};
\draw (5,-0.2) node [below] {{$\frac{a}{4}$}};
\draw (7,-0.2) node [below] {{$\frac{a}{8}$}};
\draw (9,-0.2) node [below] {{$\frac{a}{16}$}};

\draw       (0,1) node [right] {$\scriptstyle{\ve}$};
\draw       (2,1) node [right] {$\scriptstyle{\ve}$};
\draw       (4,1) node [right] {\small{$\frac{\ve}{4}$}};
\draw       (6,1) node [right] {\small{$\frac{\ve}{4}$}};
\draw       (8,1) node [right] {\small{$\frac{\ve}{16}$}};
\draw       (10,1) node [right] {\small{$\frac{\ve}{16}$}};


\draw      (0,0) node [left] {$\scriptstyle{x_1^1}$};
\draw      (2,0) node [left] {$\scriptstyle{x_2^1}$};
\draw      (4,0) node [left] {$\scriptstyle{x_3^1}$};
\draw      (6,0) node [left] {$\scriptstyle{x_4^1}$};
\draw      (8,0) node [left] {$\scriptstyle{x_5^1}$};
\draw      (10,0) node [left] {$\scriptstyle{x_6^1}$};
\draw (12,0) node [below] {$\scriptstyle{x^*}$};

 \foreach \i in {(0,0),(2,0),(4,0),(6,0),(8,0),(10,0),(12,0)}
  \fill[black] \i circle (2.4pt);


\draw       (0,0.5) node [left] {$\scriptstyle{x_1^2}$};
\draw       (2,0.5) node [left] {$\scriptstyle{x_2^2}$};
\draw       (4,0.5) node [left] {$\scriptstyle{x_3^2}$};
\draw       (6,0.5) node [left] {$\scriptstyle{x_4^2}$};
\draw       (8,0.5) node [left] {$\scriptstyle{x_5^2}$};
\draw       (10,0.5) node [left] {$\scriptstyle{x_6^2}$};
 \foreach \i in {(0,0.5),(2,0.5),(4,0.5),(6,0.5),(8,0.5),(10,0.5)}
  \fill[black] \i circle (2.4pt);


\draw       (0,1.1) node  {$\vdots$};
\draw       (2,1.1) node  {$\vdots$};
\draw       (4,1.1) node  {$\vdots$};
\draw       (6,1.1) node  {$\vdots$};
\draw       (8,1.1) node  {$\vdots$};
\draw       (10,1.1) node  {$\vdots$};


\draw       (0,1.5) node [left] {$\scriptstyle{x_1^{n-2}}$};
\draw       (2,1.5) node [left] {$\scriptstyle{x_2^{n-2}}$};
\draw       (4,1.5) node [left] {$\scriptstyle{x_3^{n-2}}$};
\draw       (6,1.5) node [left] {$\scriptstyle{x_4^{n-2}}$};
\draw       (8,1.5) node [left] {$\scriptstyle{x_5^{n-2}}$};
\draw       (10,1.5) node [left] {$\scriptstyle{x_6^{n-2}}$};
 \foreach \i in {(0,1.5),(2,1.5),(4,1.5),(6,1.5),(8,1.5),(10,1.5)}
  \fill[black] \i circle (2.4pt);x


\draw       (0,2) node [left] {$\scriptstyle{x_1^{n-1}}$};
\draw       (2,2) node [left] {$\scriptstyle{x_2^{n-1}}$};
\draw       (4,2) node [left] {$\scriptstyle{x_3^{n-1}}$};
\draw       (6,2) node [left] {$\scriptstyle{x_4^{n-1}}$};
\draw       (8,2) node [left] {$\scriptstyle{x_5^{n-1}}$};
\draw       (10,2) node [left] {$\scriptstyle{x_6^{n-1}}$};

 \foreach \i in {(0,2),(2,2),(4,2),(6,2),(8,2),(10,2)}
  \fill[black] \i circle (2.4pt);

 \foreach \i in {(10+0.3,0),(10+0.6,0),(10+0.9,0),(10+1.2,0),(10+1.5,0),(10+1.8,0)}
  \fill[black] \i circle (0.8pt);

\draw[rounded corners=.1cm] (-0.15, -0.2) rectangle (0.4, 2.2) {};
\draw[rounded corners=.1cm] (2-0.15, -0.2) rectangle (2+0.4, 2.2) {};
\draw[rounded corners=.1cm] (4-0.15, -0.2) rectangle (4+0.45, 2.2) {};
\draw[rounded corners=.1cm] (6-0.15, -0.2) rectangle (6+0.45, 2.2) {};
\draw[rounded corners=.1cm] (8-0.15, -0.2) rectangle (8+0.6, 2.2) {};
\draw[rounded corners=.1cm] (10-0.15, -0.2) rectangle (10+0.6, 2.2) {};

\end{tikzpicture}
\begin{center}
\caption{The points of the space $(X,d)$ with distances between them.}\label{fig1}
\end{center}
\end{center}
\end{figure}
The distances between the points of the space $X$ are depicted in Figure~\ref{fig1}. In other words, all the pairwise distances between the points from the set $\{x_{i}^1,\ldots,x_i^{n-1}\}$ are equal. The distances between any point from the set $\{x_{i}^1,\ldots,x_i^{n-1}\}$ and any point from the set $\{x_{i+1}^1,\ldots,x_{i+1}^{n-1}\}$ are equal to $a/2^{i-1}$. The set of points $\{x^*,x_1^1,x_2^1,\ldots,x_i^1,\ldots\}$ is embeddable into the real line and $d(x_1^1,x^*)=2a$. The reader can easily verify that for sufficiently small $\ve$ and sufficiently large $a$ the metric $d$ is well-defined. Moreover, the space is complete with the single accumulation point $x^*$.

Define a mapping  $T\colon X\to X$ as $Tx_i^j=x_{i+1}^j$, for all $i=1,2,\ldots$, $j=1,\ldots,n-1$ and $Tx^*=x^*$. As usual, by $\tbinom{n}{k}$ we denote the number of combinations of $k$ elements from $n$-element set.
Since $$
S(x_i^1,\ldots, x_i^{n-1})=S(Tx_i^1,\ldots, Tx_i^{n-1})=S(x_{i+1}^1,\ldots, x_{i+1}^{n-1})= {\tbinom{n-1}{2}} \ve/2^{i-1}
$$
if $i$ is odd, by~(\ref{e1}) we have that $T$ is  not a mapping contracting total pairwise distance on $n-1$ points.

Let us show that $T$ is a mapping contracting total pairwise distance on $n$ points. Let $x_1,\ldots,x_n$ be pairwise distinct points from the space $X$. By the pigeonhole principle  there exists al least two different sets $\{x_{i}^1,\ldots,x_i^{n-1}\}$ and $\{x_{j}^1,\ldots,x_j^{n-1}\}$, $i\neq j$, such that each of them contains at least one element from the set $\{x_1,\ldots,x_n\}$. This means that among the summands of the total pairwise distance $S(x_1,\ldots,x_n)$ there are values depending on $a$. The summands depending on $\ve$  may be absent. Let
$$
S(x_1,\ldots,x_n) = s(\ve)+s(a),
$$
where $s(\ve)$ and $s(a)$ are the sums of summands depending on $\ve$ and $a$, respectively. Note the following simple fact: if $x$ and $y$ belong to different sets $\{x_{i}^1,\ldots,x_i^{n-1}\}$ and $\{x_{j}^1,\ldots,x_j^{n-1}\}$, $i\neq j$, then $d(Tx,Ty)=d(x,y)/2$.
Hence,
$$
S(Tx_1,\ldots,Tx_n) = s'(\ve)+s(a)/2,
$$
where $s'(\ve)$ is a corresponding sum depending on $\ve$. It is clear that $s'(\ve)\leqslant s(\ve)$. 
Consider the ratio
$$
\frac{S(Tx_1,\ldots,Tx_n)}{S(x_1,\ldots,x_n)}=
\frac{s'(\ve)+s(a)/2}{s(\ve)+s(a)}\leqslant \frac{s(\ve)+s(a)/2}{s(a)}
=\frac{s(\ve)}{s(a)}+\frac{1}{2}.
$$
Hence, in order to show inequality~(\ref{e1}) we have to show that for some $0<\beta<\frac{1}{2}$ the inequality $s(\ve)/s(a)\leqslant \beta$ holds for any set $\{x_1,\ldots,x_n\}$ of $n$ pairwise distinct points. Let $\{x_1,\ldots,x_n\}$ be any set of $n$ pairwise distinct points from $X$ and let $i$ be the smallest lower index of the point $x_i^j\in \{x_1,\ldots,x_n\}$.
It is not hard to see that under this condition in the case if $i$ is even the value $s(\ve)/s(a)$ is maximal if $\{x_1,\ldots,x_n\}=\{x_i^1,\ldots,x_i^{n-1},x_{i+1}^l\}$.
In the case if $i$ is odd the same maximum of $s(\ve)/s(a)$ is attained also if $\{x_1,\ldots,x_n\}=\{x_{i}^m,x_{i+1}^1,\ldots,x_{i+1}^{n-1}\}$.
Indeed, any other arrangement of points preserving this condition reduces the number of summands depending on $\ve$ and increases the number of summands depending on $a$ in the sum $S(x_1,\ldots,x_n)$.
If $i$ is odd, then
$$
s(\ve)/s(a)=(\tbinom{n-1}{2}\ve/2^{i-1})/((n-1)a/2^{i-1})=\tbinom{n-1}{2}\ve/(a(n-1))
$$
and if $i$ is even, then
$$
s(\ve)/s(a)=(\tbinom{n-1}{2}\ve/2^{i-2})/((n-1)a/2^{i-1})=2\tbinom{n-1}{2}\ve/(a(n-1)).
$$
Hence, for every $0<\beta<\frac{1}{2}$  there exist sufficiently small $\ve$ and sufficiently large $a$ such that the inequality $s(\ve)/s(a)\leqslant \beta$ holds. Thus, inequality~(\ref{e1}) can be satisfied for any $\frac{1}{2}<\alpha<1$ and any set of pairwise distinct points $\{x_1,\ldots.,x_n\}$  under the appropriate choice of $\ve$ and $a$.
\end{example}

\end{document}